\newtheorem{theorem}{Theorem}
\newtheorem{corollary}[theorem]{Corollary}
\newtheorem{lemma}[theorem]{Lemma}
\newtheorem{proposition}[theorem]{Proposition}
\title{On magic factors in Stein's method for compound Poisson approximation}
\author{Fraser Daly\footnote{Department of Actuarial Mathematics and Statistics and the Maxwell Institute for Mathematical Sciences, Heriot-Watt University, Edinburgh EH14 4AS, UK.  E-mail: f.daly@hw.ac.uk;  Tel: +44 (0)131 451 3212; Fax: +44 (0)131 451 3249}}
\date{\today}
\begin{document}

\maketitle

\noindent{\bf Abstract} One major obstacle in applications of Stein's method for compound Poisson approximation is the availability of so-called magic factors (bounds on the solution of the Stein equation) with favourable dependence on the parameters of the approximating compound Poisson random variable.  In general, the best such bounds have an exponential dependence on these parameters, though in certain situations better bounds are available. In this paper, we extend the region for which well-behaved magic factors are available for compound Poisson approximation in the Kolmogorov metric, allowing useful compound Poisson approximation theorems to be established in some regimes where they were previously unavailable.  To illustrate the advantages offered by these new bounds, we consider applications to runs, reliability systems, Poisson mixtures and sums of independent random variables.
\vspace{12pt}

\noindent{\bf Key words and phrases:} Compound Poisson approximation; Stein's method; Kolmogorov distance; runs; reliability

\vspace{12pt}

\noindent{\bf MSC 2010:} 62E17 (Primary); 60F05, 62E10 (Secondary)

\section{Introduction}

In recent years, Stein's method has proved to be a versatile technique for proving explicit compound Poisson approximation results in a wide variety of settings; see \cite{bc01} and references therein for an introduction to these techniques and a discussion of several applications.  One of the difficulties in applying Stein's method for compound Poisson approximation is the availability of good bounds on the solution of the so-called Stein equation in this setting; the availability of such favourable bounds depends upon the parameters of the compound Poisson distribution in question satisfying one of a number of conditions, which we discuss further below.  Our purpose here is to show how one such condition can be generalized and relaxed.

We say that $U\sim\mbox{CP}(\lambda,\bm{\mu})$ has a compound Poisson distribution if $U$ is equal in distribution to $\sum_{j=1}^NX_j$, where $N\sim\mbox{Po}(\lambda)$ has a Poisson distribution with mean $\lambda$, and $X,X_1,X_2,\ldots$ are i$.$i$.$d$.$ with $\mathbb{P}(X=k)=\mu_k$.  We write $\bm{\mu}=(\mu_1,\mu_2,\ldots)$, and let $\lambda_k=\lambda\mu_k$.  We also write
$$
\theta_k=\sum_{j=1}^\infty j(j-1)\cdots(j-k)\lambda_j\,,
$$
for $k=0,1,\ldots$.

Stein's method for compound Poisson approximation, as developed in \cite{bcl92}, begins by finding, for each function $h$ in some given class $\mathcal{H}$, a function $f_h$ solving
\begin{equation}\label{eq:cpstein}
h(x)-\mathbb{E}h(U)=\sum_{j=1}^\infty j\lambda_jf_h(x+j)-xf_h(x)\,,
\end{equation}
for each $x\in\mathbb{Z^+}=\{0,1,2\ldots\}$, where $U\sim\mbox{CP}(\lambda,\bm{\mu})$.   We may then assess the quality of the approximation of a non-negative, integer-valued random variable $W$ by the compound Poisson random variable $U$ by bounding the right-hand side of the following equality:
\begin{equation}\label{eq:stein}
\sup_{h\in\mathcal{H}}\left|\mathbb{E}h(W)-\mathbb{E}h(U)\right|
=\sup_{h\in\mathcal{H}}\left|\mathbb{E}\left[\sum_{j=0}^\infty j\lambda_jf_h(W+j)-Wf_h(W)\right]\right|\,.
\end{equation}
If we choose $\mathcal{H}$ to be the set $\mathcal{H}_{TV}=\left\{I(\cdot\in A):A\subseteq\mathbb{Z}^+\right\}$, the left-hand side of (\ref{eq:stein}) becomes the total variation distance between $W$ and $U$.  In this note, our primary interest is in the Kolmogorov distance, defined by 
$$
d_K(\mathcal{L}(W),\mathcal{L}(U))=\sup_{y\in\mathbb{Z}^+}\left|\mathbb{P}(W\leq y)-\mathbb{P}(U\leq y)\right|\,,
$$ 
which can be obtained from (\ref{eq:stein}) by choosing $\mathcal{H}$ to be the class $\mathcal{H}_K=\left\{I(\cdot\leq y):y\in\mathbb{Z}^+\right\}$.

In bounding the right-hand side of (\ref{eq:stein}), it is essential to have bounds controlling the behaviour of $f_h$.  This is typically achieved (for the Kolmogorov distance) by finding upper bounds on
$$
M_l^{(K)}=M_l^{(K)}(U)=\sup_{h\in\mathcal{H}_K}\sup_{x\in\mathbb{Z}^+}|\Delta^lf_h(x)|\,,
$$ 
for $l=0,1$, where $\Delta$ denotes the forward difference operator, so that $\Delta f(x)=f(x+1)-f(x)$ for any function $f$.  Such bounds are often referred to as Stein factors, or magic factors.

Similarly, when using (\ref{eq:stein}) to bound the total variation distance between $W$ and $U$, upper bounds for $M_0^{(TV)}$ and $M_1^{(TV)}$ are required, where these quantities are defined analogously to the above, but with $\mathcal{H}_K$ replaced by $\mathcal{H}_{TV}$.  Note that $M_l^{(K)}\leq M_l^{(TV)}$ for each $l$ (since $\mathcal{H}_K\subseteq\mathcal{H}_{TV}$), and so Stein factors for total variation distance may also be employed when considering approximation in Kolmogorov distance.

Unfortunately, good Stein factors for compound Poisson approximation are often not readily available.  Barbour et al. \cite[Theorem 4]{bcl92} show that
\begin{equation}\label{eq:genbd}
M_0^{(TV)},M_1^{(TV)}\leq\min\left\{1,\frac{1}{\lambda_1}\right\}e^\lambda\,,
\end{equation}
and that, in general, this dependence on $\lambda$ cannot be improved.  Such bounds are therefore useful only for small $\lambda$.  However, there are certain conditions on the $\lambda_j$ under which better Stein factors are available for the corresponding compound Poisson random variable.  For example, if we assume that
\begin{equation}\label{eq:inc}
j\lambda_j\geq(j+1)\lambda_{j+1}\,,
\end{equation}
for all $j\geq1$, then \cite[Proposition 1.1]{bx00} shows that
\begin{equation}\label{eq:bx00}
M_0^{(K)}\leq\min\left\{1,\sqrt{\frac{2}{e\lambda_1}}\right\}\,,\qquad M_1^{(K)}\leq\min\left\{\frac{1}{2},\frac{1}{\lambda_1+1}\right\}\,,
\end{equation}
which vastly improves the corresponding compound Poisson approximation bounds.  An upper bound on $M_1^{(TV)}$ is also established by \cite[Theorem 5]{bcl92} under the same condition (\ref{eq:inc}).  This bound has dependence on the $\lambda_j$ which is not quite as good as that in (\ref{eq:bx00}), as it includes an undesirable logarithmic term.

Barbour and Utev \cite{bu98} use Fourier techniques to relax the condition (\ref{eq:inc}) somewhat, and establish Stein factors for compound Poisson approximation in Kolmogorov distance of a better order than is generally available.  Unfortunately, their bound on $M_1^{(K)}$ again includes an undesirable logarithmic term, which can only be removed at the cost of a significantly increased constant in the bound.

Their bounds are proved by making the choice of test function $h(x)=t^x-\mathbb{E}t^U$ in (\ref{eq:cpstein}), for $t\in\mathbb{C}$ with $|t|=1$.  With this choice of test function, Barbour and Utev \cite[Theorem 2.1]{bu98} show that the equation (\ref{eq:cpstein}) is solved by
\begin{equation}\label{eq:sol}
f_h(x)=\int_t^1u^{x-1}e^{\lambda[\mu(t)-\mu(u)]}\,du\,,
\end{equation}
for $x\geq1$, where $\mu(t)=\sum_{j=1}^\infty\mu_jt^j$ and the integral is taken along any contour in the unit disc in $\mathbb{C}$ from $t$ to 1.  The solution to the equation (\ref{eq:cpstein}) for any $h\in\mathcal{H}_K$ may then be written in terms of functions of the form (\ref{eq:sol}).  This allows bounds on the $M_l^{(K)}$ to be found using the following result, which is proved as part of Theorem 3.1 of \cite{bu98}:
\begin{theorem}[\cite{bu98}]\label{thm:bu}
Let $f_h$ be given by (\ref{eq:sol}) and assume that $\theta_0<\infty$.  If there exists $\delta>0$ such that
$$
|f_h(x)|\leq\frac{|1-t|}{\delta(1-\mbox{Re}[t])}\quad\mbox{ and }\quad|\Delta f_h(x)|\leq\frac{|1-t|^2}{\delta(1-\mbox{Re}[t])}\,,
$$
for all $t\in\mathbb{C}$ with $|t|=1$, then the corresponding compound Poisson random variable $U\sim\mbox{CP}(\lambda,\bm{\mu})$ satisfies
$$
M_0^{(K)}\leq2\sqrt{\frac{2}{\delta}}\quad\mbox{ and }\quad M_1^{(K)}\leq\frac{1}{2\delta}\left[1+\log^+(\pi\delta)\right]\,,
$$
$\log^+$ denoting the positive part of the natural logarithm.
\end{theorem}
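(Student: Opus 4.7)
I would use Fourier inversion to transport the hypothesised bounds on $f_h$ (for exponential test functions $h(x)=t^x-\mathbb{E}t^U$) to bounds on the Stein solutions $f_y$ corresponding to the Kolmogorov test functions $h_y(x)=I(x\le y)-\mathbb{P}(U\le y)$. Fourier inversion of the point masses at $k=0,\ldots,y$ gives
\[
I(x\le y) - \mathbb{P}(U\le y) = \frac{1}{2\pi}\int_{-\pi}^{\pi} g_y(\theta)\bigl(e^{ix\theta}-\mathbb{E}e^{iU\theta}\bigr)\,d\theta,\qquad g_y(\theta)=\sum_{k=0}^{y}e^{-ik\theta},
\]
and, since the Stein operator $\mathcal{A}f(x)=\sum_j j\lambda_jf(x+j)-xf(x)$ is linear in $f$ and $f_{e^{i\theta}}$ in (\ref{eq:sol}) solves the Stein equation for $h(x)=e^{ix\theta}-\mathbb{E}e^{iU\theta}$, one obtains
\[
f_y(x)=\frac{1}{2\pi}\int_{-\pi}^{\pi} g_y(\theta)\,f_{e^{i\theta}}(x)\,d\theta,
\]
with an analogous formula for $\Delta f_y(x)$. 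Setting $t=e^{i\theta}$, the hypotheses become $|f_{e^{i\theta}}(x)|\le 1/(\delta|\sin(\theta/2)|)$ and the $\theta$-uniform bound $|\Delta f_{e^{i\theta}}(x)|\le 2/\delta$.

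To bound the resulting Fourier integrals I would introduce a cutoff $\eta\in(0,\pi]$ and split each integral at $|\theta|=\eta$. On the outer region, combining $|g_y(\theta)|\le 1/|\sin(\theta/2)|$ with the hypotheses and the elementary inequality $|\sin(\theta/2)|\ge|\theta|/\pi$ produces a contribution of order $1/(\delta\eta)$ for $f_y$ and of order $\log(\pi/\eta)/\delta$ for $\Delta f_y$. On the inner region, where the bound on $|f_{e^{i\theta}}|$ has a non-integrable singularity, one must exploit a Dirichlet-type cancellation in $g_y$: from the factorization $g_y(\theta)=(1-e^{-i(y+1)\theta})/(1-e^{-i\theta})$ and the fact that $f_{e^{i\theta}}(x)|_{\theta=0}=f_1(x)=0$ in (\ref{eq:sol}), the ``$1$'' piece integrates against $f_{e^{i\theta}}$ to $\sum_{k=0}^\infty\phi_k(x)=0$ (with $\phi_k$ the Stein solution for the point mass at $k$), so that only the oscillatory piece remains, and that piece can be estimated using the uniform bound on $|\Delta f_{e^{i\theta}}|$. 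Optimizing the cutoff, $\eta\sim\delta^{-1/2}$ yields $M_0^{(K)}\le 2\sqrt{2/\delta}$, and $\eta\sim 1/(\pi\delta)$ yields $M_1^{(K)}\le [1+\log^+(\pi\delta)]/(2\delta)$, the $\log^+$ being trivial when $\pi\delta\le 1$.

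The chief obstacle is the inner region. A direct $L^1$ bound on the integrand for $f_y(x)$ diverges (because $1/|\sin(\theta/2)|$ is not integrable at $\theta=0$), and even using the $\theta$-uniform bound on $\Delta f_{e^{i\theta}}$ one incurs a spurious $\log(y+1)$ dependence unless the Dirichlet cancellation is exploited carefully. Handling this cancellation is what produces both the square-root dependence on $\delta^{-1}$ in $M_0^{(K)}$ and the $y$-uniform logarithmic factor in $M_1^{(K)}$.
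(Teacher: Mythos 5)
Two preliminary remarks. First, the paper you are working from does not prove Theorem~\ref{thm:bu} at all: it is quoted verbatim as part of Theorem~3.1 of Barbour and Utev \cite{bu98}, so there is no ``paper's own proof'' to compare against; I can only evaluate your sketch on its own terms and against what the Barbour--Utev machinery must plausibly contain. Second, your high-level picture is the right one: the result \emph{is} obtained by Fourier inversion of the Kolmogorov test functions against the exponential solutions $f_{e^{i\theta}}$ of (\ref{eq:sol}), followed by splitting the $\theta$-integral, and the hypotheses do translate exactly into $|f_{e^{i\theta}}(x)|\le 1/(\delta|\sin(\theta/2)|)$ and $|\Delta f_{e^{i\theta}}(x)|\le 2/\delta$, as you state. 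The outer-region estimates you give (of order $1/(\delta\eta)$ and $\log(\pi/\eta)/\delta$ respectively) are correct.

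The genuine gap is precisely where you flag it, at the inner region, and the cancellation you invoke does not resolve it as described. Writing $g_y(\theta)=(1-e^{-i(y+1)\theta})/(1-e^{-i\theta})$ and subtracting $\sum_{k\ge0}\phi_k\equiv0$ merely replaces $g_y$ by the tail kernel $\tilde g_y(\theta)=e^{-i(y+1)\theta}/(1-e^{-i\theta})$, and $|\tilde g_y(\theta)|=1/(2|\sin(\theta/2)|)$ has exactly the same singularity as the bound $|g_y(\theta)|\le 1/|\sin(\theta/2)|$; nothing integrable has been gained. Moreover, your claim that ``that piece can be estimated using the uniform bound on $|\Delta f_{e^{i\theta}}|$'' is problematic for the $M_0^{(K)}$ bound: a bound on $\Delta f_{e^{i\theta}}$ does not directly bound $|f_y(x)|$, and the obvious workaround, $f_{e^{i\theta}}(x)=\sum_{j<x}\Delta f_{e^{i\theta}}(j)$, introduces an $x$-dependent factor. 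What is actually needed near $\theta=0$ is a quantitative improvement over the hypothesis $|f_{e^{i\theta}}(x)|\le 1/(\delta|\sin(\theta/2)|)$ (the hypothesis bound blows up there, while $f_{e^{i\theta}}(x)\to f_1(x)=0$), but your sketch never supplies such an improvement with a $\delta$-only constant; it simply asserts that exploiting the cancellation ``produces'' the $\sqrt{2/\delta}$ and $\log^+(\pi\delta)$ forms. Finally, the proposed cutoffs $\eta\sim\delta^{-1/2}$ and $\eta\sim 1/(\pi\delta)$ exceed $\pi$ whenever $\delta$ is small, so the split degenerates; the small-$\delta$ case needs its own argument, which you do not give. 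In short: you have correctly identified the strategy and the obstacle, but the step that would overcome the obstacle is missing, and the specific mechanism you propose (the $\sum\phi_k=0$ cancellation followed by the $\Delta f$ bound) does not, as written, produce the claimed $y$-uniform, $\delta$-explicit constants.
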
 

An alternative condition, different in flavour to (\ref{eq:inc}), is also available, under which Stein factors also exhibit more favourable dependence on the $\lambda_j$ than is generally possible.  Barbour and Xia \cite[Theorem 2.5]{bx99} prove that if our compound Poisson random variable satisfies
\begin{equation}\label{eq:bxcond}
\theta_0-2\theta_1>0\,,
\end{equation}  
then 
\begin{equation}\label{eq:bx99}
M_0^{(TV)}\leq\frac{\sqrt{\theta_0}}{\theta_0-2\theta_1}\,,\quad\mbox{ and }\quad M_1^{(TV)}\leq\frac{1}{\theta_0-2\theta_1}\,.
\end{equation}

Our purpose in this note is to show how the Fourier techniques embodied in Theorem \ref{thm:bu} can be used to generalize and relax the condition (\ref{eq:bxcond}) when finding reasonable Stein factors for compound Poisson approximation in Kolmogorov distance.  This will extend the applicability of various compound Poisson approximation results in the literature, allowing approximation theorems with a reasonable error bound to be established for previously inaccessible parameter values.  We illustrate this in Section \ref{sec:2} with applications to runs and reliability systems, where good compound Poisson estimates are established for a larger range of parameter values than was previously available. 

Unfortunately, as we are taking advantage of Theorem \ref{thm:bu}, our bounds on $M_1^{(K)}$ will include the undesirable logarithmic factor.  The dependence on the $\lambda_j$ will, of course, still be superior to the exponential bound available in the general case, and allow us to approach approximation problems for which the conditions (\ref{eq:inc}) or (\ref{eq:bxcond}) do not hold.


Our generalization of (\ref{eq:bxcond}), by modifying the inequality to include $\theta_j$ for $j>1$, is presented in Section \ref{sec:1}.  Applications to runs and reliability systems are then given in Section \ref{sec:2}.  Section \ref{sec:gen2} will present a further relaxation of inequality (\ref{eq:bxcond}), allowing good Stein factors to be established in some cases where $\theta_0<2\theta_1$.  Some examples are given to illustrate this result.

\section{A generalization of the Barbour--Xia condition}\label{sec:1}

We use this section to prove our main theorem, the following generalization of the condition (\ref{eq:bxcond}):
\begin{theorem}\label{thm:main}
Let $k\in\{1,2,\ldots\}$ and $U\sim\mbox{CP}(\lambda,\bm{\mu})$.  Define $g_k:(-\pi,\pi]\times[0,1]\mapsto\mathbb{R}$ by
$$
g_k(\phi,p)=\frac{1}{\cos\phi-1}\sum_{j=1}^k\frac{\mbox{Re}[(e^{i\phi}-1)^j]}{j!}\frac{\left(1-(1-p)^j\right)}{p}\theta_{j-1}-\frac{2^{k}}{k!}\theta_k\,.
$$
Let 
$$
\delta_k=\inf_{\phi,p}g_k(\phi,p)\,.
$$  
Assume that $\theta_0<\infty$ and $\delta_k>0$.  Then $U$ satisfies
$$
M_0^{(K)}\leq2\sqrt{\frac{2}{\delta_k}}\quad\mbox{ and }\quad M_1^{(K)}\leq\frac{1}{2\delta_k}\left[1+\log^+(\pi\delta_k)\right]\,.
$$
\end{theorem}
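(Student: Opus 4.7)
The plan is to apply Theorem \ref{thm:bu} with $\delta = \delta_k$. This reduces the task to verifying, for every $t = e^{i\phi}$ with $|t| = 1$ and $\phi \ne 0$, the two pointwise bounds
\begin{equation*}
|f_h(x)| \le \frac{|1-t|}{\delta_k(1-\cos\phi)} \quad\text{and}\quad |\Delta f_h(x)| \le \frac{|1-t|^2}{\delta_k(1-\cos\phi)}.
\end{equation*}
I will evaluate the integral (\ref{eq:sol}) along the straight chord $u(p) = 1 + (1-p)(t-1)$ for $p \in [0,1]$; a direct computation gives $|u(p)|^2 = 1 - 2p(1-p)(1-\cos\phi) \le 1$, so the chord lies inside the closed unit disc and $|u^{x-1}| \le 1$. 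Along this chord, $u(0) = t$, $u(1) = 1$, and $du = (1-t)\,dp$.

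The central step is to control $\mathrm{Re}[\lambda(\mu(t)-\mu(u))]$. Let $\tilde R_k(z) = \lambda[\mu(z)-1] - \sum_{m=1}^{k}\frac{\theta_{m-1}}{m!}(z-1)^m$. Because $(\lambda\mu)^{(m)}(1) = \theta_{m-1}$, the function $\tilde R_k$ vanishes to order $k+1$ at $z = 1$, and $\tilde R_k^{(k+1)}(z) = \lambda\mu^{(k+1)}(z)$ is bounded in modulus by $\theta_k$ on the closed unit disc. Substituting $u - 1 = (1-p)(t-1)$ into the resulting Taylor decomposition yields
\begin{equation*}
\lambda[\mu(t) - \mu(u)] = \sum_{m=1}^{k}\frac{\theta_{m-1}}{m!}(t-1)^{m}\bigl[1 - (1-p)^{m}\bigr] + \bigl[\tilde R_k(t) - \tilde R_k(u)\bigr].
\end{equation*}
After dividing by $p(\cos\phi - 1)$, the main sum reproduces exactly the explicit sum defining $g_k(\phi, p)$.

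The main obstacle is to bound the remainder sharply enough, with the crucial extra factor of $p$; a naive estimate of $\tilde R_k(t)$ and $\tilde R_k(u)$ separately only yields a $p$-independent constant, which would be useless here. My strategy is to apply Taylor's theorem twice: first to obtain $|\tilde R_k'(w)| \le |w-1|^k \theta_k / k!$ via the integral form (using the bound on $\tilde R_k^{(k+1)}$), then to integrate $\tilde R_k'$ along the chord from $u$ to $t$, which has length $p|t-1|$ and whose points satisfy $|w-1| = |t-1|\,[1-(1-r)p]$ as $r \in [0,1]$. The inner integral $\int_0^1 [1-(1-r)p]^{k}\,dr = [1-(1-p)^{k+1}]/[(k+1)p]$, together with the elementary estimates $1-(1-p)^{k+1} \le (k+1)p$ and $|t-1|^{k+1} \le 2^{k-1}|t-1|^2 = 2^k(1-\cos\phi)$, delivers
\begin{equation*}
|\tilde R_k(t) - \tilde R_k(u)| \le \frac{|t-1|^{k+1}\theta_k\bigl[1-(1-p)^{k+1}\bigr]}{(k+1)!} \le \frac{2^{k}\theta_k}{k!}\,p(1-\cos\phi).
\end{equation*}

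Combining, the real part of the identity divided by the negative quantity $p(\cos\phi - 1)$ (which reverses the inequality) is bounded below by $g_k(\phi, p) \ge \delta_k$, so $\mathrm{Re}[\lambda(\mu(t)-\mu(u))] \le -\delta_k\,p(1-\cos\phi)$. Exponentiating and integrating then gives
$|f_h(x)| \le |1-t|\int_0^1 e^{-\delta_k p(1-\cos\phi)}\,dp \le |1-t|/[\delta_k(1-\cos\phi)]$.
The bound on $|\Delta f_h(x)|$ follows identically from $\Delta f_h(x) = \int_t^1 u^{x-1}(u-1)e^{\lambda[\mu(t)-\mu(u)]}\,du$ together with $|u-1| = (1-p)|t-1| \le |t-1|$, which supplies the additional factor of $|1-t|$. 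Applying Theorem \ref{thm:bu} then delivers the stated bounds on $M_0^{(K)}$ and $M_1^{(K)}$.
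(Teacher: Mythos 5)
Your proof is correct, and it follows essentially the same route as the paper: integrate along the chord from $t$ to $1$, Taylor-expand $\lambda\mu$ to order $k$ about $z=1$ so that the main term reproduces the defining sum of $g_k$, and bound the remainder using the key geometric fact $|t-1|^{k+1}\le 2^{k}(1-\cos\phi)$ for $t=e^{i\phi}$ on the unit circle, which supplies the crucial factor $p(1-\cos\phi)$. The only difference is organizational — you package the remainder as the Taylor tail $\tilde R_k$ of the whole function $\lambda\mu$ and bound $\tilde R_k'$ via the integral form, whereas the paper Taylor-expands the integrand $[t+y(1-t)]^{m-1}$ term-by-term inside the series for $\mu(t)-\mu(u)$ and interchanges sum and integral — but the resulting decomposition and remainder bound are identical.
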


The proof of Theorem \ref{thm:main} is given in Section \ref{sec:proof} below.  Applications will follow in Section \ref{sec:2}.

Choosing $k=1$ in Theorem \ref{thm:main}, the condition $\delta_1>0$ is easily shown to be equivalent to (\ref{eq:bxcond}).  The condition $\delta_2>0$ is actually stronger than (\ref{eq:bxcond}).  However, for $k\geq3$ we obtain a condition which will, in certain situations, be weaker than (\ref{eq:bxcond}).  In Section \ref{sec:2}, our applications will mainly employ the case $k=3$.  In this case, we have the following result:
\begin{corollary}\label{cor:3}
Let $U\sim\mbox{CP}(\lambda,\bm{\mu})$.  Assume that $\theta_0<\infty$ and $\theta_2<2\theta_1$.  If
$$
\delta=\theta_0-2\theta_1+2\theta_2-\frac{4}{3}\theta_3>0\,,
$$
then $U$ satisfies
$$
M_0^{(K)}\leq2\sqrt{\frac{2}{\delta}}\quad\mbox{ and }\quad M_1^{(K)}\leq\frac{1}{2\delta}\left[1+\log^+(\pi\delta)\right]\,.
$$
\end{corollary}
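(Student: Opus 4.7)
The plan is to realise the corollary as the $k=3$ case of Theorem~\ref{thm:main}. Both bound functions $x\mapsto 2\sqrt{2/x}$ and $x\mapsto (2x)^{-1}[1+\log^+(\pi x)]$ are nonincreasing on $(0,\infty)$---the latter because on the logarithmic branch its derivative is $-\log(\pi x)/(2x^2)<0$ and below $1/\pi$ it equals $1/(2x)$---so it will suffice to show that $\delta_3=\inf_{(\phi,p)}g_3(\phi,p)\geq\delta$, after which the stated bounds follow directly from Theorem~\ref{thm:main} with $k=3$.

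First I would compute $g_3$ in closed form. Using $e^{i\phi}-1=2i\sin(\phi/2)e^{i\phi/2}$, one obtains $\mathrm{Re}[(e^{i\phi}-1)^j]/(\cos\phi-1)$ equal to $1$, $2\cos\phi$ and $2(\cos\phi-1)(2\cos\phi+1)$ for $j=1,2,3$ respectively. Combined with the algebraic identities $(1-(1-p)^j)/p=1,\,2-p,\,3-3p+p^2$ and $2^3/3!=4/3$, this yields
$$g_3(\phi,p)=\theta_0+(2-p)\cos\phi\cdot\theta_1+\frac{(\cos\phi-1)(2\cos\phi+1)(3-3p+p^2)}{3}\theta_2-\frac{4}{3}\theta_3.$$
Direct substitution of $(\phi,p)=(\pi,0)$ reproduces exactly $\theta_0-2\theta_1+2\theta_2-\frac{4}{3}\theta_3=\delta$, so the target value is realised at this boundary corner of the domain.

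Writing $c=\cos\phi\in[-1,1]$, the required bound $g_3(\phi,p)\geq\delta$ rearranges to the polynomial inequality
$$\bigl[c(2-p)+2\bigr]\theta_1+\left[\frac{(c-1)(2c+1)(3-3p+p^2)}{3}-2\right]\theta_2\geq 0, \qquad (c,p)\in[-1,1]\times[0,1].$$
The $\theta_1$-coefficient equals $2+c(2-p)\geq p\geq 0$ throughout, vanishing only at $(c,p)=(-1,0)$. The $\theta_2$-coefficient also vanishes at that corner but is sign-changing elsewhere, and this is precisely where the hypothesis $\theta_2<2\theta_1$ must intervene: my plan is to partition $(c,p)$ according to the sign of the $\theta_2$-coefficient, and on the negative region write $\theta_2=\alpha\theta_1$ with $\alpha\in[0,2)$ so as to reduce the claim to a pure polynomial inequality in the three real variables $(c,p,\alpha)$. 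Verifying this polynomial inequality is the main obstacle; the explicit computation of $g_3$ and the final monotonicity step are routine.
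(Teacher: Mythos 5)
Your approach coincides with the paper's: realize the corollary as the $k=3$ case of Theorem~\ref{thm:main}, compute $g_3$ in closed form (your formula is correct and matches the paper's), evaluate at $(\pi,0)$ to obtain $\delta$, and argue that $\inf_{\phi,p}g_3(\phi,p)\ge\delta$ so that Theorem~\ref{thm:main} delivers the bounds. You have correctly reduced the remaining step to the inequality
$$
\bigl[c(2-p)+2\bigr]\theta_1+\Bigl[\tfrac{1}{3}(c-1)(2c+1)(p^2-3p+3)-2\Bigr]\theta_2\ge0\quad\text{on }[-1,1]\times[0,1],
$$
and you are right to flag this as the crux: the paper disposes of it in the single clause ``is minimized at $(\pi,0)$'' without further justification.

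The gap is that the plan you outline for this verification cannot succeed, because the displayed inequality is not a consequence of $\theta_2<2\theta_1$. At $(c,p)=(0,0)$ it reads $2\theta_1-3\theta_2\ge0$, i.e.\ $\theta_2\le\tfrac{2}{3}\theta_1$, strictly stronger than the corollary's hypothesis. It gets worse near the corner $(-1,0)$: along $p=0$ the ratio of the $\theta_1$-coefficient to the magnitude of the (negative) $\theta_2$-coefficient is $2/(3-2c)$, which tends to $\tfrac{2}{5}$ as $c\to-1$; and since $\partial_p g_3(\pi,p)\big|_{p=0}=\theta_1-2\theta_2$, the point $(\pi,0)$ is not even a \emph{local} minimizer of $g_3$ unless $\theta_1\ge2\theta_2$. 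Consequently, substituting $\theta_2=\alpha\theta_1$ with $\alpha\in[0,2)$ yields a polynomial inequality in $(c,p,\alpha)$ that is simply false for $\alpha$ anywhere near $2$ (for instance $\alpha=3/2$ fails at $(c,p)=(0,0)$). So the inequality $\inf g_3\ge g_3(\pi,0)$ needs a genuinely stronger hypothesis than $\theta_2<2\theta_1$ to hold, and the corollary cannot be deduced from Theorem~\ref{thm:main} with $k=3$ along the route you (and the paper) propose. This gap is inherited verbatim from the paper's own one-line proof; a rigorous version would have to either impose a sharper condition relating $\theta_1$ and $\theta_2$, or abandon the attempt to bound $\inf g_3$ by its value at the single point $(\pi,0)$.
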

\begin{proof}
Choosing $k=3$ we have
$$
g_3(\phi,p)=\theta_0+(\cos\phi)(2-p)\theta_1+\frac{1}{3}(\cos\phi-1)(2\cos\phi+1)(p^2-3p+3)\theta_2-\frac{4}{3}\theta_3\,,
$$
which, under the conditions of the present result, is minimized at $(\pi,0)$.  Theorem \ref{thm:main} then gives the stated result.
\end{proof}
Note that the condition $\delta>0$ in Corollary \ref{cor:3} is weaker than (\ref{eq:bxcond}) if $2\theta_3<3\theta_2$.

\subsection{Proof of Theorem \ref{thm:main}}\label{sec:proof}

To prove Theorem \ref{thm:main}, we establish the bounds required by Theorem \ref{thm:bu} using the representation (\ref{eq:sol}), where the integral is taken over the straight line joining $t$ and $1$, so that
\begin{equation}\label{eq:frep}
f_h(x)=(1-t)\int_0^1[t+p(1-t)]^{x-1}\exp\left\{\lambda\left[\mu(t)-\mu(t+p(1-t))\right]\right\}\,dp\,,
\end{equation}
and so
\begin{equation}\label{eq:fbd}
|f_h(x)|\leq|1-t|\int_0^1\exp\left\{\lambda\mbox{Re}\left[\mu(t)-\mu(t+p(1-t))\right]\right\}\,dp\,.
\end{equation}
Now, using the definition of $\mu(t)$, we write
\begin{equation}\label{eq:murep}
\mu(t)-\mu(t+p(1-t))=\sum_{m=1}^\infty(t-1)m\mu_m\int_0^p[t+y(1-t)]^{m-1}\,dy\,.
\end{equation}
Using a Taylor expansion for $[t+y(1-t)]^{m-1}$, we write, for any $k=1,2,\ldots$,
\begin{multline*}
[t+y(1-t)]^{m-1}=\sum_{j=1}^k\frac{[(1-y)(t-1)]^{j-1}}{(j-1)!}\prod_{l=1}^{j-1}(m-l)\\ 
+(t-1)^k\left(\prod_{l=1}^k(m-l)\right)\int_y^1\int_{x_1}^1\cdots\int_{x_{k-1}}^1[t+x_k(1-t)]^{m-k-1}\,dx_k\cdots dx_1\,.
\end{multline*}
Hence, from (\ref{eq:murep}),
\begin{multline}\label{eq:++}
\lambda\left[\mu(t)-\mu(t+p(1-t))\right]=\sum_{j=1}^k\frac{(t-1)^j}{j!}\theta_{j-1}\left(1-(1-p)^j\right)\\
+\sum_{m=k+1}^\infty\left(\prod_{l=0}^k(m-l)\right)\lambda_m(t-1)^{k+1}\int_0^p\int_y^1\int_{x_1}^1\cdots\int_{x_{k-1}}^1[t+x_k(1-t)]^{m-k-1}\,dx_k\cdots dx_1\,dy\,,
\end{multline}
and so
$$
\lambda\mbox{Re}\left[\mu(t)-\mu(t+p(1-t))\right]=\sum_{j=1}^k\frac{\mbox{Re}[(t-1)^j]}{j!}\theta_{j-1}\left(1-(1-p)^j\right)+R_k\,,
$$
where
\begin{multline*}
R_k=\sum_{m=k+1}^\infty\left(\prod_{l=0}^k(m-l)\right)\lambda_m\\
\times
\int_0^p\int_y^1\int_{x_1}^1\cdots\int_{x_{k-1}}^1\mbox{Re}\left[(t-1)^{k+1}[t+x_k(1-t)]^{m-k-1}\right]\,dx_k\cdots dx_1\,dy\,.
\end{multline*}
Using the fact that $t$ is in the unit disc in $\mathbb{C}$, and that $x_k\in[0,1]$, we have 
\begin{equation}\label{eq:+++}
\mbox{Re}\left[(t-1)^{k+1}[t+x_k(1-t)]^{m-k-1}\right]\leq|t-1|^{k+1}\leq(2\mbox{Re}[1-t])^{(k+1)/2}\leq2^k\mbox{Re}[1-t]\,,
\end{equation}
where the second inequality uses the fact that $|t-1|^2\leq2\mbox{Re}[1-t]$ for $t$ in the unit disc, and the final inequality follows since $\mbox{Re}[1-t]\leq2$.

Hence, 
$$
R_k\leq\frac{2^k\mbox{Re}[1-t]}{(k+1)!}\theta_k\left(1-(1-p)^{k+1}\right)\leq\frac{2^k\mbox{Re}[1-t]}{k!}\theta_kp\,,
$$
for $p\in[0,1]$, and so when $t=e^{i\phi}$
\begin{equation}\label{eq:expbd}
\lambda\mbox{Re}\left[\mu(t)-\mu(t+p(1-t))\right]\leq-p(1-\cos\phi)g_k(\phi,p)\leq-\delta_k p(1-\cos\phi)\,.
\end{equation}
Hence, (\ref{eq:fbd}) gives
$$
|f_h(x)|\leq\frac{|1-e^{i\phi}|}{\delta_k(1-\cos\phi)}\,,
$$
and we apply Theorem \ref{thm:bu} to obtain our bound on $M_0^{(K)}$.

Similarly, (\ref{eq:frep}) also gives 
\begin{equation}\label{eq:+}
|f_h(x+1)-f_h(x)|\leq|1-t|^2\int_0^1(1-p)\exp\left\{\lambda\mbox{Re}\left[\mu(t)-\mu(t+p(1-t))\right]\right\}\,dp\,,
\end{equation}
in which we may apply the bound (\ref{eq:expbd}) to obtain the required bound on $M_1^{(K)}$ from Theorem \ref{thm:bu}.

\section{Applications}\label{sec:2}

\subsection{Reliability}

Our first application is to compound Poisson approximation of the two-dimensional consecutive $k$-out-of-$n$:$F$ system, as discussed in Section 3.2 of \cite{bc01}.  This system consists of $n^2$ components, laid out on an $n\times n$ square grid.  For a given $T>0$, each component has failed at time $T$ with probability $q$, independently of the other components in the system.  The entire system fails if there is a $k\times k$ subgrid such that all $k^2$ components have failed at time $T$.  Our interest is in compound Poisson approximation for $W$, which counts the number of the $(n-k+1)^2$ (possibly overlapping) $k\times k$ subgrids for which all components have failed at time $T$.  Letting $\psi=q^{k^2}$, the bound (3.10) of \cite{bc01} (stated here for Kolmogorov, rather than total variation, distance) gives
\begin{multline*}
d_K(\mathcal{L}(W),\mathcal{L}(U)) \\
\leq M_1^{(K)}(n-k+1)^2\psi\left((4k^2+12k-3)\psi+4\sum_{r=1}^{k-1}\sum_{s=1}^{k-1}q^{k^2-rs}+4\sum_{s=1}^{k-2}q^{k^2-ks}\right)\,,
\end{multline*}
where the approximating compound Poisson random variable $U$ is defined by
$$
\lambda_j=\frac{1}{j}\psi\left[4\pi_1(j)+4(n-k-1)\pi_2(j)+(n-k-1)^2\pi_3(j)\right]\,,
$$
for $j=1,\ldots,5$ (and $\lambda_j=0$ for $j\geq6$), where the functions $\pi_i(j)$ for $i=1,2,3$ are defined in terms of point probabilities of binomial random variables:  
$\pi_i(j)=\mathbb{P}(\mbox{Bin}(i+1,q^k)=j-1)$.

As noted by \cite{bc01}, if $q$ and $\lambda$ are small, (\ref{eq:genbd}) will suffice for providing a bound on $M_1^{(K)}$.  In the case of larger $\lambda$, \cite{bc01} considers the use of the bound in (\ref{eq:bx99}), noting that $\theta_0=(n-k+1)^2\psi$ and $\theta_1\leq4q^k\theta_0$, so that (\ref{eq:bxcond}) is satisfied if $q^k<1/8$.  Under this condition, we use (\ref{eq:bx99}) to obtain
\begin{equation}\label{eq:relsteinfactor}
M_1^{(K)}\leq\frac{1}{(n-k+1)^2\psi(1-8q^k)}\,.
\end{equation}
We consider the use of Corollary \ref{cor:3} to provide such a bound in the case where $q^k\geq1/8$, as an alternative to using a different compound Poisson random variable to obtain an approximation of greater accuracy in cases where the bounds (\ref{eq:genbd}) are too crude.  To that end, we note that straightforward calculations using the definitions of the $\lambda_j$ above give us that
\begin{eqnarray*}
\theta_1&=&4[2+3(n-k-1)+(n-k-1)^2]\psi q^k\,,\\
\theta_2&=&4[2+6(n-k-1)+3(n-k-1)^2]\psi q^{2k}\,,\\
\theta_3&=&24[n-k-1+(n-k-1)^2]\psi q^{3k}\,.
\end{eqnarray*}
From this, it is easy to see that $\theta_2<2\theta_1$ for all $n$ and $k$ if $q^k<2/3$, in which case we are in a position to apply Corollary \ref{cor:3}.  Similarly, $2\theta_3<3\theta_2$ in this case, so we expect Corollary \ref{cor:3} to yield a condition on $U$ weaker than that imposed by (\ref{eq:bxcond}).

Now, straightforward calculations show that, in the notation of Corollary \ref{cor:3},
\begin{equation}\label{eq:reliability}
\psi^{-1}\delta=4a(q^k)+4(n-k-1)b(q^k)+(n-k-1)^2c(q^k)\,,
\end{equation}
where the functions $a$, $b$ and $c$ are defined by $a(y)=(1-2y)^2$, $b(y)=(1-2y)^3$ and $c(y)=(1-4y)(1-4y+8y^2)$.  Hence, $\delta>0$ for all $q$ such that $a(q^k)>0$, $b(q^k)>0$ and $c(q^k)>0$. That is, $\delta>0$ if $q^k<1/4$, a weaker condition than that under which (\ref{eq:bx99}) may be applied.  Hence, by Corollary \ref{cor:3}, we have the following: 
\begin{proposition}
For the compound Poisson random variable $U$ defined above, if $q^k<1/4$ then $M_1^{(K)}\leq(2\delta)^{-1}[1+\log^+(\pi\delta)]$, where $\delta$ is given by (\ref{eq:reliability}).
\end{proposition}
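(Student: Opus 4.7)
The plan is to apply Corollary \ref{cor:3} directly to the compound Poisson random variable $U$ specified by the $\lambda_j$ above, and then read off the stated bound on $M_1^{(K)}$. Most of the algebra has already been collected in the paragraph preceding the statement, so the proof reduces to checking the two hypotheses of the corollary: (i) $\theta_0<\infty$ and $\theta_2<2\theta_1$, and (ii) strict positivity of $\delta=\theta_0-2\theta_1+2\theta_2-\tfrac{4}{3}\theta_3$.

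Finiteness of $\theta_0$ is immediate because $\lambda_j=0$ for $j\geq 6$, so every $\theta_i$ is a finite sum. For the inequality $\theta_2<2\theta_1$, I would substitute the displayed expressions, cancel the common factor $\psi>0$, and compare coefficients of $(n-k-1)^0$, $(n-k-1)^1$, $(n-k-1)^2$; each comparison reduces to a condition satisfied whenever $q^k<2/3$, which is weaker than our hypothesis $q^k<1/4$.

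The remaining task is to establish $\delta>0$, using the factorization (\ref{eq:reliability}). Under $q^k<1/4$ we have $1-2q^k>1/2$, so $a(q^k)=(1-2q^k)^2$ and $b(q^k)=(1-2q^k)^3$ are both strictly positive. For $c(y)=(1-4y)(1-4y+8y^2)$, the linear factor $1-4q^k$ is positive precisely on $q^k<1/4$, while the quadratic $1-4y+8y^2$ has discriminant $16-32<0$ and is therefore strictly positive for all real $y$, so it imposes no extra restriction. Hence $c(q^k)>0$, and since $n-k-1\geq 0$ in any meaningful instance of the system, the right-hand side of (\ref{eq:reliability}) is bounded below by $4a(q^k)\psi>0$.

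The only mild subtlety---and the closest thing to an obstacle---is recognising that the threshold $q^k<1/4$ is tight: it is dictated by the linear factor $1-4y$ of $c$, with the quadratic factor $1-4y+8y^2$ contributing nothing because its discriminant is negative. Once this observation is in place, all hypotheses of Corollary \ref{cor:3} are satisfied, and the claimed bound on $M_1^{(K)}$ follows directly.
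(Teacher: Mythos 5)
Your proposal is correct and takes essentially the same route as the paper: verify the hypotheses of Corollary~\ref{cor:3} using the preceding computations of $\theta_0,\ldots,\theta_3$, and deduce $\delta>0$ from the factorization~(\ref{eq:reliability}) via the positivity of $a$, $b$, $c$ on $q^k<1/4$ (the negative discriminant of $1-4y+8y^2$ being the reason $c$ is controlled entirely by the linear factor). The paper states these positivity checks more tersely but the reasoning is identical.
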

This gives a bound whose behaviour, up to logarithmic terms, for large $n$ and small $q$ is similar to that of (\ref{eq:relsteinfactor}), but which is valid under a weaker condition.

\subsection{Runs}

Let $\xi_1,\ldots,\xi_n$ be independent Bernoulli random variables, each with mean $p$.  Let $W=\sum_{i=1}^n\xi_i\xi_{i+1}$ count the number of 2-runs in this sequence, where all indices are treated modulo $n$.  Compound Poisson approximation for $W$ is a well-studied problem (see, for example, \cite{bc01,bx99,d13,pc10} and references therein), so gives us an excellent application within which to examine the benefit of our Theorem \ref{thm:main}.

Following, for example, \cite{bc01}, we approximate $W$ by a compound Poisson random variable $U$ with $\lambda_1=np^2(1-p)^2$, $\lambda_2=np^3(1-p)$, $\lambda_3=(1/3)np^4$ and $\lambda_j=0$ for $j\geq4$.  Straightforward calculations then give $\theta_0=np^2$, $\theta_1=2np^3$, $\theta_2=2np^4$ and $\theta_j=0$ for $j\geq3$.  We note that we thus always have $\theta_2<2\theta_1$, so that Corollary \ref{cor:3} may be applied with
\begin{equation}\label{eq:runs}
\delta=np^2(1-2p)^2\,,
\end{equation}
which is positive provided that $p\not=1/2$.  
\begin{proposition}
For the compound Poisson random variable $U$ defined above, if $p\not=1/2$, then $M_1^{(K)}\leq(2\delta)^{-1}[1+\log^+(\pi\delta)]$, where $\delta$ is given by (\ref{eq:runs}).  
\end{proposition}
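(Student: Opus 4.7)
The plan is to verify the hypotheses of Corollary \ref{cor:3} by direct computation and then invoke that corollary. First I will compute the factorial-moment quantities $\theta_0,\theta_1,\theta_2,\theta_3$ from the given $\lambda_j$'s. Since $\lambda_j=0$ for $j\geq 4$, each $\theta_k$ is a finite sum of at most three terms, and in particular $\theta_3=0$ because the product $j(j-1)(j-2)(j-3)$ vanishes on $j\in\{1,2,3\}$. A short calculation gives
$$
\theta_0 = \lambda_1+2\lambda_2+3\lambda_3 = np^2\bigl[(1-p)^2+2p(1-p)+p^2\bigr]=np^2,
$$
and similarly $\theta_1 = 2\lambda_2+6\lambda_3 = 2np^3$ and $\theta_2=6\lambda_3=2np^4$. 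Finiteness of $\theta_0$ is then immediate.

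Next I will check the inequality $\theta_2<2\theta_1$ required by Corollary \ref{cor:3}. With the values above this reads $2np^4<4np^3$, equivalently $p<2$, which holds for every $p\in[0,1]$. So the first hypothesis is satisfied.

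Then I will compute the quantity $\delta$ appearing in the corollary,
$$
\delta=\theta_0-2\theta_1+2\theta_2-\tfrac{4}{3}\theta_3 = np^2-4np^3+4np^4 = np^2(1-2p)^2,
$$
which is positive precisely when $p\in(0,1)\setminus\{1/2\}$ (the case $p=0$ being trivial). Applying Corollary \ref{cor:3} with this $\delta$ immediately yields the stated bound on $M_1^{(K)}$.

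There is no real obstacle here: the proposition is, by design, a clean worked example of Corollary \ref{cor:3}, and the only work is the elementary algebra above. The only mildly delicate point is recognizing that the support of $\bm{\mu}$ being confined to $\{1,2,3\}$ forces $\theta_3=0$, so that the extra $-\tfrac{4}{3}\theta_3$ term costs nothing and the positivity of $\delta$ reduces to the single exclusion $p\neq 1/2$.
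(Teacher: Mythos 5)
Your proof is correct and follows exactly the paper's route: compute $\theta_0,\ldots,\theta_3$ from the given $\lambda_j$, observe $\theta_3=0$ and $\theta_2<2\theta_1$, evaluate $\delta=np^2(1-2p)^2$, and invoke Corollary~\ref{cor:3}. The only added remark you make (that the paper leaves implicit) is flagging the trivial $p=0$ edge case; otherwise the arguments are identical.
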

For comparison, (\ref{eq:bxcond}) is valid only under the stronger condition that $p<1/4$, in which case (\ref{eq:bx99}) gives $M_1^{(K)}\leq[np^2(1-4p)]^{-1}$.  Up to logarithmic terms, these two bounds are very similar, though ours is valid for a much wider range of values of $p$.  

These bounds may be applied, for example, with the compound Poisson approximation result 
$$
d_{K}(\mathcal{L}(W),\mathcal{L}(U))\leq3M_1^{(K)}np^4\,,
$$  
given in Section 2.2 of \cite{d13} (again, stated here in terms of Kolmogorov, rather than total variation, distance).

Note also that it is easy to show that (\ref{eq:inc}), and the weaker version of this condition derived in \cite{bu98}, hold provided that $p\leq1/3$, which is again a stronger condition than we need to apply Corollary \ref{cor:3}.

Several other compound Poisson approximation results for $W$ are also available in the literature; see Section 3.1 of \cite{bc01} for a discussion.  For example, Theorem 5.2 of \cite{bx99} gives an upper bound of order $O(p/\sqrt{n})$ on the total variation distance between $W$ and a different compound Poisson random variable to that considered here.  This is asymptotically better than the bounds we have discussed here, but note that this comes at the price of somewhat larger constants in the bound, and again holds only in the case that $p<1/4$.  Since their approximating compound Poisson random variable has $\lambda_j=0$ for all $j\geq3$, it is not possible to use our Theorem \ref{thm:main} to extend the range of values of $p$ for which their result applies.  Similar bounds, as well as further asymptotic expansions, are also given by \cite{pc10}.  

\section{Relaxing the Barbour--Xia condition}\label{sec:gen2}

If our compound Poisson random variable $U$ is such that $\lambda_j=0$ for all $j\geq3$, then Theorem \ref{thm:main} can offer no benefit over the condition (\ref{eq:bxcond}).  However, analysis along the same lines as in the proof of Theorem \ref{thm:main} allows us, in Theorem \ref{thm:gen2} below, to establish Stein factors which may be applied when (\ref{eq:bxcond}) is violated.  These Stein factors will, in general, have the exponential dependence on the parameters of $U$ exhibited by the bound (\ref{eq:genbd}), though in certain cases, as we will illustrate below, they can offer a much better bound than (\ref{eq:genbd}).

Throughout this section, we will be interested only in compound Poisson random variables such that $2\theta_1>\theta_0$.  In the case where the reverse inequality is true, Barbour and Xia \cite{bx99} have already established Stein factors with good dependence on the $\lambda_j$; the case $\theta_0=2\theta_1$ is pathological in both their analysis and ours.

We begin with the following lemma.
\begin{lemma}\label{lemma:c}
Let $c>1$, and let $U$ be a compound Poisson random variable with 
$$
\frac{\theta_1}{\theta_0}\in\left(\frac{1}{2},\frac{1}{2}+\frac{\log c}{3\theta_0}\right]\,.
$$
Let $\delta=\frac{2\theta_1-\theta_0}{2c\sqrt{\pi}}$. Then
$$
M_0^{(K)}\leq2\sqrt{\frac{2}{\delta}}\quad\mbox{ and }\quad M_1^{(K)}\leq\frac{1}{2\delta}\left[1+\log^+(\pi\delta)\right]\,.
$$
\end{lemma}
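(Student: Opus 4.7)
The plan is to verify the hypotheses of Theorem~\ref{thm:bu} with $\delta = (2\theta_1-\theta_0)/(2c\sqrt{\pi})$ by reprising the derivation of Theorem~\ref{thm:main} at $k = 1$, but with the crucial difference that the $\delta_1$ of that theorem is negative here, so the ``exponent is non-positive'' closure is unavailable and a more careful pointwise estimate of the integrand is required. Starting from the integral representation (\ref{eq:frep}) with $t = e^{i\phi}$, the $k=1$ Taylor expansion (\ref{eq:++}) together with the remainder bound $\mbox{Re}[(t-1)^2[t+x_1(1-t)]^{m-2}] \le |t-1|^2 = 2(1-\cos\phi)$ inherited from (\ref{eq:+++}) yields, after taking real parts,
$$
\lambda\,\mbox{Re}\bigl[\mu(t)-\mu(t+p(1-t))\bigr] \;\le\; (1-\cos\phi)\bigl[Ap - \theta_1 p^2\bigr],
$$
where $A := 2\theta_1 - \theta_0 > 0$. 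Writing $u = 1-\cos\phi$, the hypothesis $\theta_0 > 0$ forces $A < 2\theta_1$, so the quadratic's maximiser $p^{\ast} = A/(2\theta_1)$ lies in $(0,1)$ and $Ap - \theta_1 p^2 \le A^2/(4\theta_1)$ on $[0,1]$.

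The immediate consequence is the crude integral bound
$$
\int_0^1 \exp\bigl(\lambda\,\mbox{Re}[\mu(t)-\mu(t+p(1-t))]\bigr)\,dp \;\le\; \exp\!\Bigl(\tfrac{uA^2}{4\theta_1}\Bigr),
$$
which also controls the weighted version appearing in (\ref{eq:+}) via $1-p \le 1$. Plugging into (\ref{eq:fbd}) and (\ref{eq:+}), both Stein-factor hypotheses of Theorem~\ref{thm:bu} reduce to the single scalar inequality
$$
Au \,\exp\!\Bigl(\tfrac{uA^2}{4\theta_1}\Bigr) \;\le\; 2c\sqrt{\pi} \qquad\text{for every } u \in (0,2].
$$
The left-hand side is increasing in $u$, so it is enough to handle $u=2$, i.e.\ $A\exp(A^2/(2\theta_1)) \le c\sqrt{\pi}$.

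Here the two ingredients of the hypothesis enter separately. First, $A < 2\theta_1$ forces $A^2/(2\theta_1) < A$ and hence $\exp(A^2/(2\theta_1)) < e^A$; second, the size restriction $\theta_1/\theta_0 \le 1/2 + (\log c)/(3\theta_0)$ gives $A \le (2/3)\log c$ and so $e^A \le c^{2/3}$. Combining the two crude steps yields $A\exp(A^2/(2\theta_1)) < (2\log c/3)\,c^{2/3}$, and the whole lemma reduces to the elementary one-variable inequality $(2\log c)/3 \le c^{1/3}\sqrt{\pi}$ for $c > 1$. The main obstacle is precisely this calibration, since it is what pins down the constants $3$ and $\sqrt{\pi}$ in the statement: a direct calculus check shows that $c^{1/3}\sqrt{\pi} - (2/3)\log c$ has its only critical point on $[1,\infty)$ at $c_0 = 8\pi^{-3/2}$, where it equals $2 - \log(4/\pi) > 0$, so the inequality holds throughout $[1,\infty)$. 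Once it is confirmed, Theorem~\ref{thm:bu} immediately delivers the stated estimates on $M_0^{(K)}$ and $M_1^{(K)}$.
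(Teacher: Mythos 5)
Your proof is correct, but it takes a genuinely different route from the paper at the key analytic step. Both proofs start from the $k=1$ expansion and the same quadratic bound on the exponent, $\lambda\,\mathrm{Re}[\mu(t)-\mu(t+p(1-t))]\le(1-\cos\phi)\bigl[(2\theta_1-\theta_0)p-\theta_1 p^2\bigr]$. The paper then \emph{completes the square and extends the integral to a full Gaussian over $\mathbb{R}$}, producing the factor $\sqrt{\pi/(\alpha_t\theta)}$ (with $\alpha_t=\theta_0\mathrm{Re}[1-t]$, $\theta=\theta_1/\theta_0$), and then trades the residual exponential against this $\alpha_t$-dependent factor via the elementary lemma ``if $y\le(2/3)\log c$ then $e^y\le c/\sqrt{y}$''; this yields $\delta=(2\theta_1-\theta_0)/(2c\sqrt{\pi})$ exactly, with the $\sqrt{\pi}$ coming straight from the Gaussian integral. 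You instead discard the Gaussian factor entirely by bounding $\int_0^1\exp(\cdot)\,dp$ by the supremum of the integrand (valid since the interval has unit length and $p^{\ast}=A/(2\theta_1)\in(0,1)$), and then close with a chain of cruder inequalities: $\exp(A^2/(2\theta_1))<e^A\le c^{2/3}$ and the one-variable calibration $(2/3)\log c\le c^{1/3}\sqrt{\pi}$, which you verify by calculus has positive minimum $2-\log(4/\pi)$ at $c_0=8\pi^{-3/2}$. Your route is more elementary and avoids the Gaussian integral, but it works only because the target $\delta$ has enough slack to absorb the losses in your chain (each $<$ is strict with room to spare), whereas the paper's route hits the stated $\delta$ on the nose. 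Your closing remark that the calibration $(2/3)\log c\le c^{1/3}\sqrt{\pi}$ is ``what pins down the constants $3$ and $\sqrt{\pi}$'' is a feature of your particular argument rather than a structural fact about the lemma: in the paper's proof those constants arise instead from the Gaussian normalisation and from the lemma $e^y\le c/\sqrt{y}$, and your calibration plays no role. Aside from that interpretive gloss, the proof is sound.
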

\begin{proof}
We use the same notation as in the proof of Theorem \ref{thm:main}.  From (\ref{eq:++}) with the choice $k=1$, we have
$$
\lambda\left[\mu(t)-\mu(t+p(1-t))\right]=-(1-t)p\theta_0+(1-t)^2\sum_{i=1}^\infty i(i-1)\lambda_i\int_0^p\int_y^1\left[t+u(1-t)\right]^{i-2}\,du\,dy\,.
$$  
Now, using (\ref{eq:+++}), we have that $\mbox{Re}\left[(1-t)^2[t+u(1-t)]^{i-2}\right]\leq2\mbox{Re}\left[1-t\right]$ for $t$ in the unit disc in $\mathbb{C}$ and $u\in[0,1]$, and so
\begin{eqnarray*}
\lambda\mbox{Re}\left[\mu(t)-\mu(t+p(1-t))\right]&\leq&-\mbox{Re}[1-t]p\theta_0+2\mbox{Re}[1-t]\theta_1\int_0^p\int_y^1\,du\,dy\\
&=&-\alpha_tp[1-2\theta(1-p/2)]\,,
\end{eqnarray*}
where $\theta=\frac{\theta_1}{\theta_0}$ and $\alpha_t=\theta_0\mbox{Re}[1-t]$.  Using this bound in (\ref{eq:fbd}), we get
\begin{eqnarray*}
|f_h(x)|&\leq&|1-t|\int_0^1\exp\left\{-\alpha_tp\left[1-2\theta\left(1-\frac{p}{2}\right)\right]\right\}\,dp\\
&\leq&|1-t|\exp\left\{\frac{(2\theta-1)^2\alpha_t}{4\theta}\right\}\int_{-\infty}^\infty\exp\left\{-\alpha_t\theta\left(p-\frac{2\theta-1}{2\theta}\right)^2\right\}\,dp\\
&=&|1-t|\sqrt{\frac{\pi}{\alpha_t\theta}}\exp\left\{\frac{(2\theta-1)^2\alpha_t}{4\theta}\right\}\,.
\end{eqnarray*}

We note that, for any $c,y\in\mathbb{R}$, if $y\leq(2/3)\log c$, then $e^y\leq c/\sqrt{y}$.  Applying this with the constant $c$ as in the statement of the lemma, and $y=(4\theta)^{-1}(2\theta-1)^2\alpha_t$, we get
$$
|f_h(x)|\leq\frac{2c\sqrt{\pi}|1-t|}{\alpha_t(2\theta-1)}\,,
$$
if $y\leq d$, where $d=(2/3)\log c$.  This allows us to bound $M_0^{(K)}$ using Theorem \ref{thm:bu}, once we have checked that
\begin{equation}\label{eq:polycond}
\frac{(2\theta-1)^2\alpha_t}{4\theta}\leq d\,,
\end{equation}
for the compound Poisson random variable $U$ defined in the statement of the lemma, and for all $t$ in the unit disc in $\mathbb{C}$.  To that end, note that $\alpha_t\leq2\theta_0$, and so (\ref{eq:polycond}) holds if $\theta_0(2\theta-1)^2\leq2d\theta$, which holds if and only if $\theta\in[\theta_L,\theta_U]$, where
$$
\theta_L=\frac{1}{2}-\left(\frac{\sqrt{d(d+4\theta_0)}-d}{4\theta_0}\right)<\frac{1}{2}\,,
$$
and
$$
\theta_U=\frac{1}{2}+\left(\frac{\sqrt{d(d+4\theta_0)}+d}{4\theta_0}\right)>\frac{1}{2}+\frac{d}{2\theta_0}=\frac{1}{2}+\frac{\log c}{3\theta_0}\,.
$$
Since
$$
[\theta_L,\theta_U]\supseteq\left(\frac{1}{2},\frac{1}{2}+\frac{\log c}{3\theta_0}\right]\,,
$$
the bound on $M_0^{(K)}$ follows.  

A similar argument gives a bound for $M_1^{(K)}$: in place of (\ref{eq:fbd}), we use (\ref{eq:+}) in the above to get
$$
|\Delta f_h(x)|\leq|1-t|^2\sqrt{\frac{\pi}{\alpha_t\theta}}\exp\left\{\frac{(2\theta-1)^2\alpha_t}{4\theta}\right\}\leq\frac{2c\sqrt{\pi}|1-t|^2}{\alpha_t(2\theta-1)}\,,
$$ 
for $\theta\in[\theta_L,\theta_U]$, as above.  We again apply Theorem \ref{thm:bu} to yield a bound on $M_1^{(K)}$.
\end{proof}
In Lemma \ref{lemma:c}, we stated our bound for the values of $\theta$ given, rather than for all $\theta\in[\theta_L,\theta_U]$, since $\theta<1/2$ is already taken care of by \cite{bx99}, and $\theta=1/2$ gives $\delta=0$, and so non-informative bounds in our Stein factors.

Choosing $c=\exp\left\{\frac{3}{2}(2\theta_1-\theta_0)\right\}>1$ in Lemma \ref{lemma:c}, we obtain our main result of this section:
\begin{theorem}\label{thm:gen2}
Let $U$ be a compound Poisson random variable with $2\theta_1>\theta_0$ and let
$$
\delta=\frac{2\theta_1-\theta_0}{2\sqrt{\pi}\exp\left\{\frac{3}{2}(2\theta_1-\theta_0)\right\}}\,.
$$
Then
$$
M_0^{(K)}\leq2\sqrt{\frac{2}{\delta}}\quad\mbox{ and }\quad M_1^{(K)}\leq\frac{1}{2\delta}\left[1+\log^+(\pi\delta)\right]\,.
$$
\end{theorem}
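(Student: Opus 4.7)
The plan is to derive Theorem \ref{thm:gen2} as an immediate corollary of Lemma \ref{lemma:c} by making a judicious choice of the free parameter $c$. Lemma \ref{lemma:c} produces exactly the Stein factor bounds claimed in the theorem, but only under (i) $c>1$ and (ii) $\theta_1/\theta_0 \in (1/2,\,1/2+\log c/(3\theta_0)]$, with the resulting value $\delta=(2\theta_1-\theta_0)/(2c\sqrt{\pi})$. Since larger $\delta$ gives stronger bounds and $\delta$ decreases in $c$, the strategy is to take $c$ as small as the interval constraint allows.

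First I would verify the interval condition for the prescribed choice $c=\exp\{\tfrac{3}{2}(2\theta_1-\theta_0)\}$. The upper endpoint $1/2+\log c/(3\theta_0)$ simplifies algebraically to $1/2+(2\theta_1-\theta_0)/(2\theta_0)=\theta_1/\theta_0$, placing $\theta_1/\theta_0$ at exactly the right-hand endpoint of the admissible half-open interval; the strict left inequality $\theta_1/\theta_0>1/2$ is guaranteed by the hypothesis $2\theta_1>\theta_0$. The same hypothesis yields $\tfrac{3}{2}(2\theta_1-\theta_0)>0$ and hence $c>1$, so all hypotheses of Lemma \ref{lemma:c} are satisfied.

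Substituting $c=\exp\{\tfrac{3}{2}(2\theta_1-\theta_0)\}$ into $\delta=(2\theta_1-\theta_0)/(2c\sqrt{\pi})$ yields precisely the $\delta$ stated in Theorem \ref{thm:gen2}, and the bounds on $M_0^{(K)}$ and $M_1^{(K)}$ then follow immediately from the conclusion of Lemma \ref{lemma:c}.

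There is no real obstacle here, since all the analytic work (the first-order Taylor expansion of the exponent, the Gaussian integral bound on $|f_h(x)|$, and the application of Theorem \ref{thm:bu}) has been done inside Lemma \ref{lemma:c}. The only point one might scrutinise is optimality of the chosen $c$: because $\delta$ is monotonically decreasing in $c$ while the interval constraint forces $c$ to be at least as large as the value that places $\theta_1/\theta_0$ at the right-hand endpoint, the prescribed $c$ is precisely the smallest admissible one, and no further tuning can improve $\delta$ within the framework of Lemma \ref{lemma:c}.
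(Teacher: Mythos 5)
Your proof is correct and matches the paper's own derivation exactly: the paper states Theorem \ref{thm:gen2} as an immediate consequence of Lemma \ref{lemma:c} with the choice $c=\exp\{\tfrac{3}{2}(2\theta_1-\theta_0)\}$, and your verification that this $c$ exceeds $1$ and places $\theta_1/\theta_0$ precisely at the admissible right-hand endpoint is the required (and sufficient) bookkeeping.
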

The bounds of Theorem \ref{thm:gen2} are, of course, exponential in the $\lambda_j$.  The advantage of these bounds over (\ref{eq:genbd}) can be seen by considering a compound Poisson random variable with $\lambda_2=\frac{1}{2}\lambda_1+\gamma$, for some moderate $\gamma>0$, and $\lambda_j=0$ for $j\geq3$.  In this case, the bound (\ref{eq:genbd}) is exponential in both $\lambda_1$ and $\gamma$, while our Theorem \ref{thm:gen2} gives bounds which are exponential in $\gamma$, but do not depend on $\lambda_1$.  This may be advantageous if $\lambda_1$ is large.  Some illustrations of this are given below, where we consider compound Poisson approximation for a mixed Poisson distribution, and for a sum of independent random variables.

\subsection{Mixed Poisson distributions}

We illustrate Theorem \ref{thm:gen2} by considering compound Poisson approximation for a mixed Poisson random variable $W\sim\mbox{Po}(\xi)$, where $\xi$ is a positive random variable with mean $\nu$ and variance $\sigma^2$.  Letting $U$ have a compound Poisson distribution with $\lambda_1=\nu-\sigma^2$, $\lambda_2=\sigma^2/2$ and $\lambda_j=0$ for $j\geq3$, the proof of Theorem 6 of \cite{d11} gives
$$
d_K(\mathcal{L}(W),\mathcal{L}(U))\leq1.2M_1^{(K)}\mathbb{E}|\xi-\nu|^3\,,
$$
assuming that $\nu>\sigma^2$.  If we have $\nu>2\sigma^2$, from (\ref{eq:bx99}) we have the bound $M_1^{(K)}\leq(\nu-2\sigma^2)^{-1}$.  If $\sigma^2<\nu<2\sigma^2$, we cannot employ the results of \cite{bx99}, but we may use our Theorem \ref{thm:gen2}.  If we write $2\sigma^2=\nu+\gamma$, we have the bound $M_1^{(K)}\leq(2\delta)^{-1}[1+\log^+(\pi\delta)]$, where
$$
\delta=\frac{\gamma}{2\sqrt{\pi}\exp\left\{\frac{3\gamma}{2}\right\}}\,,
$$
which gives a reasonable bound as long as $\gamma$ is neither too small nor too large.  By contrast, the general bound (\ref{eq:genbd}) is exponential in $\nu$, and so may be very much worse in the setting with large $\nu$ and moderate $\gamma$.  

\subsection{Independent summands}

Let $W=Z_1+\cdots+Z_n$, where $Z_1,\ldots,Z_n$ are independent integer-valued random variables.  Corollary 4.4 of \cite{bx99} presents a bound in the approximation of $W$ by a compound Poisson random variable with $\lambda_1=2\mathbb{E}W-\mbox{Var}(W)$, $\lambda_2=(1/2)(\mbox{Var}(W)-\mathbb{E}W)$ and $\lambda_j=0$ for $j\geq3$.  Several other related bounds are also presented; we focus on this only for concreteness. Given that (\ref{eq:bxcond}) is satisfied if and only if $\mathbb{E}W>(2/3)\mbox{Var}(W)$, the bound presented by \cite{bx99} applies if $(2/3)\mbox{Var}(W)<\mathbb{E}W<2\mbox{Var}(W)$.  If we have instead that $(1/2)\mbox{Var}(W)<\mathbb{E}W<(2/3)\mbox{Var}(W)$, we may not apply the results of \cite{bx99} directly, but we may replace their use of the bound (\ref{eq:bx99}) with the bound given by our Theorem \ref{thm:gen2} to derive an approximation theorem in the Kolmogorov distance.  In considering cases in which this bound would be not too large, remarks similar to those made above apply.

\vspace{12pt}

\noindent{\bf Acknowledgements} The author thanks Sergey Utev for invaluable initial discussions related to this work.

\end{document}